
\documentclass{amsart}
\usepackage{amsmath}
\usepackage{amssymb}
\usepackage{amsfonts}

\setcounter{MaxMatrixCols}{10}

\newtheorem{theorem}{Theorem}
\theoremstyle{plain}

\newtheorem{corollary}{Corollary}

\newtheorem{definition}{Definition}

\newtheorem{lemma}{Lemma}

\newtheorem{remark}{Remark}

\numberwithin{equation}{section}
\input{tcilatex}

\begin{document}
\title[]{$n$-tuplet fixed point of multivalued mappings via measure of
noncompactness}
\author{Derya Sekman}
\address{Department of Mathematics, Faculty of Arts and Sciences, Ahi Evran
University, 40100 K\i r\c{s}ehir, Turkey}
\email{deryasekman@gmail.com}
\urladdr{}
\thanks{}
\author{Nour El Houda Bouzara}
\address{Faculty of Mathematics, University of Science and Technology Houari
Boumedi\`{e}ne, Bab-Ezzouar, 16111 Algies, Algeria}
\email{bzr.nour@gmail.com}
\urladdr{}
\thanks{}
\author{Vatan Karakaya}
\address{Department of Mathematical Engineering, Y\i ld\i z Technical
University, 34210 Istanbul, Turkey}
\email{vkkaya@yahoo.com}
\urladdr{}
\subjclass[2010]{47H9, 47H10, 34A60}
\keywords{ Measure of noncompactness, $n$-tuplet fixed point, multivalued
set contraction mapping, system of differential inclusions}
\dedicatory{}
\thanks{}

\begin{abstract}
In this work, by using measure of noncompactness, some results on the
existence of $n$-tuplet fixed points for multivalued contraction mappings
are proved. As an application, the existence of solution for a system of
integral inclusions is studied.
\end{abstract}

\maketitle

\section{\protect\bigskip Introduction}

Fixed point theory is an important area in mathematics which is closely
related to real world problems and has many applications in different fields
of science. It has played an important role in solving problems of
uniqueness and existence of the nonlinear analysis, topology and geometry.
In the literature, it has applications in many sciences like engineering 
\cite{fle}, economy \cite{ok}, optimization \cite{ceng}, game theory \cite%
{bor} and medicine \cite{chen}. Brouwer \cite{bro} proved fixed point
theorem on $n$-dimensional space. Banach \cite{ban} established contraction
principle which supplied find fixed point theorem using contraction mapping.
Later, Schauder \cite{sch} proved an extension of the Brouwer's fixed point
theorem to spaces of infinite dimension under compactness condition.

For fixed point theory of single valued mappings, one can consult in \cite%
{b1,b2,Schauder}. The case of multivalued mappings compared to single valued
mappings direct application to the real world problems \cite{n1,karlin}. The
literature is important in view of its. Along with that Kakutani \cite%
{kakutani} extended Brouwer's fixed point theorem to multivalued mappings.
Afterward, Nadler \cite{nadler} extended Banach contraction principle from
single valued mappings to multivalued mappings using Hausdorff metric.
Later, classes of Nadler's fixed point theorem was extended and generalized
for various multivalued mappings in \cite{reich,mizoguchi}.

Measure of noncompactness has played a fundamental role in the study of
single valued and multivalued mappings, especially, in metric and
topological fixed point theory. It is very useful tool to guarantees the
existence of fixed point. The measure of noncompactness was defined and
studied by Kuratowski \cite{kurotowski}. Darbo \cite{darbo1} used this
measure to generalize both Schauder's fixed point theorem and Banach's
contraction principle for condensing operators. Recently, measure of
noncompactness has been used in differential equations, integral equations,
nonlinear equations as given in \cite{darbo,ag1,ag2}.

Partially ordered metric spaces are very important in fixed point theory. By
using two basic concepts, Guo and Lakshmikantham \cite{guo} first gave some
existence theorems of the coupled fixed point for both continuous and
discontinuous operators, and then they offered some applications to the
initial value problems of ordinary differential equations with discontinuous
right-hand sides. Bhaskar and Lakshmikantham \cite{bhaskar} introduced
coupled fixed point and established in a some coupled fixed point theorems
in a partially ordered metric space. By a similar idea, Berinde and Borcut 
\cite{borcut} established a tripled \ fixed point for nonlinear mapping in
partially ordered complete metric spaces. Ert\"{u}rk and Karakaya \cite%
{erturk} introduced the concept of $n$-tuplet fixed point and studied
existence and uniqueness of fixed point of contractive type mappings in
partially ordered metric spaces. Moreover, by using the condensing
operators, Aghajani et al. \cite{article1} presented some results on the
existence of coupled fixed point Karakaya et al. \cite{vatan} gave some
results concerning the existence of tripled fixed point via measure of
noncompactness. For different models of measure of noncompactness in \cite{A}%
.

The existence of fixed point for various contractive mappings has been
studied by many authors under different conditions. The concept of \ coupled
fixed point for multivalued mappings was introduced by Samet and Vetro \cite%
{samet} and they presented coupled fixed point theorem for multivalued
nonlinear contraction mappings in a partially ordered metric space. Rao et
al. \cite{Rao} obtained a tripled coincidence fixed point theorems for
multivalued mappings in a partially ordered metric space.

In this paper, by using condensing operator, we investigate $n$-tuplet fixed
point of multivalued mappings on a Banach space. Finally, we also give an
application of our result to solve a system of integral inclusions.

\section{Preliminaries}

Throughout this paper, $E$ is a Banach space and $P(E)$ (or $2^{E}$) is the
set of all subsets of $E$. \ We denote the set 
\begin{equation*}
P_{k}(E)=\left\{ X\subset E,\text{ }X\text{ is nonempty and has a property }%
k\right\} \text{.}
\end{equation*}
So, $P_{rcp}(E),P_{cl,bd}(E),P_{cl,cv}(E)$ will denote the classes of all
relatively compact, closed-bounded and closed-convex subsets of $E,$
respectively.

A mapping $T:E\rightarrow P_{k}(E)$ is called a multivalued mapping or set
valued mapping on $E$ into $E$. A point $x\in E$ is called a fixed point of $%
T$ if $x\in Tx$.

\begin{definition}[see; \protect\cite{Dhage}]
A mapping $\mu :P_{cl,bd}(X)\rightarrow 
\mathbb{R}
^{+}$ is called a measure of noncompactness if it satisfies the following
conditions:
\end{definition}

$($M$_{1})$ $\varnothing \neq \mu ^{-1}(0)\subset P_{rcp}(X)$,

$($M$_{2})$ $\mu (\bar{A})=\mu (A)$, where $\bar{A}$ denotes the closure of $%
A$,

$($M$_{3})$ $\mu (conv$ $A)=$ $\mu (A)$, where $conv$ $A$ denotes the convex
hull of $\ A$,

$($M$_{4})$ $\mu $ is nondecreasing,

$($M$_{5})$ If $\left\{ A_{n}\right\} $ is a decreasing sequence of sets in $%
P_{cl,bd}(X)$ satisfying $\lim\limits_{n\rightarrow \infty }\mu (A_{n})=0$,
then the intersection%
\begin{equation*}
A_{\infty }=\underset{n=1}{\overset{\infty }{\cap }}A_{n}
\end{equation*}%
is nonempty.

If $($M$_{4})$ holds, then $A_{\infty }\in P_{rcp}(X).$ For this, let $%
\lim\limits_{n\rightarrow \infty }\mu (A_{n})=0.$ As $A_{\infty }\subseteq
A_{n}$ for each $n=0,1,2,...;$ by the monotonicity of $\mu ,$ we obtain 
\begin{equation*}
\mu (A_{\infty })\leq \lim\limits_{n\rightarrow \infty }\mu (A_{n})=0.
\end{equation*}%
So, by $($M$_{1})$, we get that $A_{\infty }$ is nonempty and$\ A_{\infty
}\in P_{rcp}(X)$.

\begin{theorem}[see; \protect\cite{Schauder}]
Let $X$ be a closed and convex subset of a Banach space $E$. Then every
compact, continuous map $T:X\rightarrow X$\ has at least one fixed point.
\end{theorem}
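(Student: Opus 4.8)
The plan is to reduce this infinite-dimensional statement to Brouwer's fixed point theorem (already invoked in the Introduction) by means of finite-dimensional approximation. Since $T$ is compact, the set $K=\overline{T(X)}$ is a compact subset of $E$, and because $X$ is closed with $T(X)\subseteq X$ we have $K\subseteq X$. The first step is to exploit the compactness of $K$: for each $\varepsilon>0$ there exist finitely many points $x_{1},\dots,x_{m}\in K$ whose $\varepsilon$-balls cover $K$.

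Next I would construct the Schauder projection $P_{\varepsilon}\colon K\to E$ associated with this finite cover. Using the continuous weight functions $\lambda_{i}(y)=\max\{0,\varepsilon-\|y-x_{i}\|\}$ and normalizing, one sets $P_{\varepsilon}(y)=\sum_{i}\lambda_{i}(y)x_{i}\big/\sum_{i}\lambda_{i}(y)$. This map is continuous, takes values in the convex hull $C_{\varepsilon}=\operatorname{conv}\{x_{1},\dots,x_{m}\}$, which lies in a finite-dimensional subspace and is compact and convex, and satisfies $\|P_{\varepsilon}(y)-y\|<\varepsilon$ for every $y\in K$. Since each $x_{i}\in K\subseteq X$ and $X$ is convex, we also have $C_{\varepsilon}\subseteq X$.

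Then I would apply Brouwer's theorem to the composition $T_{\varepsilon}=P_{\varepsilon}\circ T$ restricted to $C_{\varepsilon}$. Indeed $T_{\varepsilon}$ maps the compact convex finite-dimensional set $C_{\varepsilon}$ continuously into itself, so Brouwer's theorem yields a point $z_{\varepsilon}\in C_{\varepsilon}$ with $T_{\varepsilon}(z_{\varepsilon})=z_{\varepsilon}$. The approximation estimate then gives
\begin{equation*}
\|z_{\varepsilon}-T(z_{\varepsilon})\|=\|P_{\varepsilon}(T(z_{\varepsilon}))-T(z_{\varepsilon})\|<\varepsilon,
\end{equation*}
so that $z_{\varepsilon}$ is an approximate fixed point of $T$.

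Finally I would pass to the limit. Taking $\varepsilon=1/n$ produces a sequence $z_{n}\in X$ with $\|z_{n}-T(z_{n})\|\to 0$. Since $T(z_{n})\in K$ and $K$ is compact, a subsequence $T(z_{n_{k}})$ converges to some $y\in K\subseteq X$; the estimate forces $z_{n_{k}}\to y$ as well, and continuity of $T$ then gives $T(z_{n_{k}})\to T(y)$, whence $y=T(y)$. The main obstacle is the construction and verification of the Schauder projection, in particular checking that its range sits inside $X$ and that the approximation bound $\|P_{\varepsilon}(y)-y\|<\varepsilon$ holds uniformly, since everything else follows routinely once Brouwer's theorem is available on the finite-dimensional slice $C_{\varepsilon}$.
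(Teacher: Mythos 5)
Your argument is correct: it is the classical proof of Schauder's fixed point theorem by finite-dimensional approximation (Schauder projections onto the convex hull of a finite $\varepsilon$-net of the compact set $\overline{T(X)}$, Brouwer's theorem on that finite-dimensional compact convex slice, and a limit of approximate fixed points using compactness of $\overline{T(X)}$). The paper itself offers no proof to compare against --- the statement is quoted as a known result with a reference to the literature --- so there is no question of matching the authors' route; your write-up supplies the standard one, and all the delicate points (well-definedness and continuity of $P_{\varepsilon}$ because the balls cover $K$, the bound $\|P_{\varepsilon}(y)-y\|<\varepsilon$ as a convex combination of vectors of norm less than $\varepsilon$, the inclusion $C_{\varepsilon}\subseteq X$ by convexity, and the self-mapping property of $P_{\varepsilon}\circ T$ on $C_{\varepsilon}$) are correctly identified and handled. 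The only caveat worth recording is that the theorem as stated omits the hypothesis that $X$ be nonempty, which your proof (and any proof) tacitly requires in order for the $\varepsilon$-net, and hence $C_{\varepsilon}$, to be nonempty before Brouwer's theorem can be invoked.
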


\begin{theorem}[see; \protect\cite{darbo}]
Let $X$ be a nonempty, bounded, closed and convex subset of a Banach space $%
E $ and let\ $T:X\rightarrow X$ be a continuous mapping. Suppose that there
exists a constant $k\in \left[ 0,1\right) $ such that%
\begin{equation*}
\mu (T(X))\leq k\mu (X)
\end{equation*}%
for any subset $X$ of $E$, then $T$ has a fixed point.
\end{theorem}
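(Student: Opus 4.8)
The plan is to prove this by the classical Darbo argument: construct a nested sequence of sets whose measure of noncompactness decays geometrically, and then apply Schauder's theorem (Theorem 2) on the limiting set. First I would set $X_{0}=X$ and define recursively $X_{n+1}=\overline{conv}\,T(X_{n})$, the closed convex hull of the image. A straightforward induction shows that each $X_{n}$ is nonempty, bounded, closed and convex, that $T(X_{n})\subseteq X_{n}$, and that the sequence is nested: since $T$ maps $X$ into the closed convex set $X$ we get $X_{1}=\overline{conv}\,T(X_{0})\subseteq X_{0}$, and if $X_{n}\subseteq X_{n-1}$ then $T(X_{n})\subseteq T(X_{n-1})$ forces $X_{n+1}\subseteq X_{n}$.

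Next I would control the measure of noncompactness along this sequence. Using the invariance of $\mu$ under closure $($M$_{2})$ and under convex hull $($M$_{3})$ together with the contraction hypothesis applied to the subset $X_{n}$,
\begin{equation*}
\mu(X_{n+1})=\mu\bigl(\overline{conv}\,T(X_{n})\bigr)=\mu\bigl(T(X_{n})\bigr)\leq k\,\mu(X_{n}),
\end{equation*}
so iterating gives $\mu(X_{n})\leq k^{n}\mu(X)$. Because $k\in[0,1)$ and $\mu(X)$ is finite (as $X$ is bounded, hence $X\in P_{cl,bd}(E)$), we conclude $\mu(X_{n})\to 0$.

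Then I would pass to the limit. Since $\{X_{n}\}$ is a decreasing sequence of nonempty, closed, bounded subsets of $E$ with $\mu(X_{n})\to 0$, axiom $($M$_{5})$ and the remark following the Definition guarantee that $X_{\infty}=\bigcap_{n=1}^{\infty}X_{n}$ is nonempty and relatively compact; being an intersection of closed convex sets it is also closed and convex, hence compact and convex. Moreover $T(X_{\infty})\subseteq X_{\infty}$: for $x\in X_{\infty}$ we have $x\in X_{n}$ for every $n$, whence $Tx\in T(X_{n})\subseteq X_{n+1}$ for every $n$, so $Tx\in X_{\infty}$. Thus $T$ restricts to a continuous self-map of the nonempty compact convex set $X_{\infty}$, and Theorem 2 (Schauder) yields a fixed point of $T$ in $X_{\infty}\subseteq X$.

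The main obstacle is organizing the interplay between the three structural operations, namely taking images under $T$, taking convex hulls, and taking closures, so that the nesting, the self-mapping property, and the measure estimate all hold simultaneously. Once the sequence $\{X_{n}\}$ is correctly set up, the geometric decay is immediate from $($M$_{2})$--$($M$_{3})$ and the hypothesis, and the remaining work is a routine application of $($M$_{5})$ together with Schauder's theorem.
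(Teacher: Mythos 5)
The paper states this theorem as a quoted result (Darbo's fixed point theorem, cited from \cite{darbo}) and gives no proof of its own, so there is nothing internal to compare against. Your argument is the classical proof: the nested sequence $X_{n+1}=\overline{conv}\,T(X_{n})$, the geometric decay $\mu(X_{n})\leq k^{n}\mu(X)$ via $($M$_{2})$--$($M$_{3})$, the passage to the nonempty compact convex intersection via $($M$_{5})$, and Schauder's theorem on $X_{\infty}$; all steps are correct and this is exactly the standard argument from the cited source.
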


\begin{definition}[see; \protect\cite{Dhage}]
A multivalued mapping\ $T:E\rightarrow P_{cl,bd}(E)$ is called $D$-$set$-$%
Lipschitz$ if there exists a continuous nondecreasing function $\varphi :%
\mathbb{R}^{+}\rightarrow \mathbb{R}^{+}$ such that%
\begin{equation*}
\mu (T(X))\leq \varphi (\mu (X))
\end{equation*}%
for all $X\in P_{cl,bd}(E)$ with $T(X)\in P_{cl,bd}(E$), where $\varphi
(0)=0 $. Generally, we call the function $\varphi $ to be a$\ D$-$function$
of $T$ on $E$.

When $\varphi (r)=kr$, $k>0$, $T$ is called a $k$-$set$-$Lipschitz$ mapping
and if $k<1$, then $T$ is called a $k$-$set$-$contraction$ on $E$.
\end{definition}

If $\varphi (r)<r$ for $r>0$, then $T$ is called a $nonlinear$ $D$-$set$-$%
contraction$ on E.

\begin{lemma}[see; \protect\cite{asym}]
If $\varphi $ is a $D$-$function$ with $\varphi (r)<r$ for $r>0$, then%
\begin{equation*}
\lim\limits_{n\rightarrow \infty }\varphi ^{n}(t)=0
\end{equation*}%
for all $t\in \left[ 0,\infty \right) $.
\end{lemma}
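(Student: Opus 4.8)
The plan is to fix $t\in\lbrack 0,\infty )$ and study the orbit sequence $a_{n}:=\varphi ^{n}(t)$, $n=0,1,2,\dots$, with $a_{0}=t$. The case $t=0$ is immediate: since being a $D$-$function$ requires $\varphi (0)=0$, an easy induction gives $\varphi ^{n}(0)=0$ for every $n$, so the limit is $0$. I would therefore concentrate on $t>0$. First I would record that the sequence stays in $\mathbb{R}^{+}$ because $\varphi $ maps $\mathbb{R}^{+}$ into $\mathbb{R}^{+}$, so $a_{n}\geq 0$ for all $n$; this furnishes a lower bound.

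Next I would establish that $\{a_{n}\}$ is nonincreasing. The key inequality is $\varphi (r)<r$ for $r>0$. As long as $a_{n}>0$ we get $a_{n+1}=\varphi (a_{n})<a_{n}$, while if some $a_{m}=0$ then $a_{n}=0$ for all $n\geq m$ by $\varphi (0)=0$; in either situation $a_{n+1}\leq a_{n}$. Hence $\{a_{n}\}$ is a nonincreasing sequence bounded below by $0$, so it converges to some limit $L\geq 0$. To pin down $L$, I would pass to the limit in the recursion $a_{n+1}=\varphi (a_{n})$. Using the continuity of the $D$-$function$ $\varphi $, taking $n\rightarrow \infty $ yields $L=\varphi (L)$. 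If $L>0$, this contradicts the strict inequality $\varphi (L)<L$; therefore $L=0$, which is precisely $\lim_{n\rightarrow \infty }\varphi ^{n}(t)=0$.

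The step I expect to carry the real content is the passage to the limit, where the hypothesis that $\varphi $ is continuous (built into the definition of a $D$-$function$) is essential: without it, a nonincreasing bounded sequence could converge to a positive $L$ while still satisfying $a_{n+1}=\varphi (a_{n})$, so the contradiction would not materialize. I note that the monotonicity (nondecreasing) assumption on $\varphi $ is not actually needed for this argument; the only properties invoked are $\varphi (0)=0$, $\varphi (r)<r$ for $r>0$, and continuity. The remaining verifications---nonnegativity of the iterates and the monotone-convergence conclusion---are routine, so the proof reduces to the short fixed-point-of-the-limit argument just described.
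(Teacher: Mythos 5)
Your argument is correct and complete: the orbit $a_{n}=\varphi^{n}(t)$ is nonincreasing and bounded below, so it converges to some $L\geq 0$, and the continuity of $\varphi$ (which is part of the definition of a $D$-function here) forces $L=\varphi(L)$, whence $L=0$ by the hypothesis $\varphi(r)<r$ for $r>0$. The paper itself states this lemma without proof, citing the literature, and your argument is exactly the standard one used there; your side remark that the nondecreasing property of $\varphi$ is not needed is also accurate.
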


\begin{theorem}[see; \protect\cite{Dhage}]
\label{cont}Let $X$ be a nonempty, bounded, closed and convex subset of a
Banach space $E$ and let \ $T:X\rightarrow P_{cl,cv}(X)$ be a closed and $%
nonlinear$ $D$-$set$-$contraction$. Then $T$ has a fixed point.
\end{theorem}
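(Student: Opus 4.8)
The plan is to reproduce the Darbo-type iteration adapted to multivalued maps, terminating on a compact convex set where a Kakutani-type fixed point theorem applies. First I would construct a nested family of sets. Put $X_{0}=X$ and define inductively
\[
X_{n+1}=\overline{conv}\,(T(X_{n})),
\]
where $\overline{conv}$ denotes the closed convex hull and $T(X_{n})=\bigcup_{x\in X_{n}}Tx$. Since $T$ sends $X$ into subsets of the closed convex set $X$, we have $T(X_{0})\subseteq X$, hence $X_{1}=\overline{conv}(T(X_{0}))\subseteq \overline{conv}(X)=X=X_{0}$. An easy induction, using the monotonicity $A\subseteq B\Rightarrow T(A)\subseteq T(B)\Rightarrow \overline{conv}(T(A))\subseteq \overline{conv}(T(B))$, then gives $X_{n+1}\subseteq X_{n}$ for all $n$. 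Thus $\{X_{n}\}$ is a decreasing sequence of nonempty, closed, bounded, convex subsets of $X$.

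Next comes the key measure estimate. Using properties $(\mathrm{M}_{2})$ and $(\mathrm{M}_{3})$ (invariance of $\mu$ under closure and convex hull) together with the $nonlinear$ $D$-$set$-$contraction$ hypothesis,
\[
\mu (X_{n+1})=\mu \bigl(\overline{conv}\,(T(X_{n}))\bigr)=\mu (T(X_{n}))\leq \varphi (\mu (X_{n})),
\]
and iterating yields $\mu (X_{n})\leq \varphi ^{n}(\mu (X))$. Because $\varphi $ is a $D$-$function$ with $\varphi (r)<r$ for $r>0$, the Lemma gives $\varphi ^{n}(\mu (X))\to 0$, so $\mu (X_{n})\to 0$. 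Property $(\mathrm{M}_{5})$ together with the remark following the definition then guarantees that
\[
X_{\infty }=\bigcap_{n\geq 1}X_{n}
\]
is nonempty and belongs to $P_{rcp}(X)$; being an intersection of closed convex sets it is in fact compact and convex.

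The final step is to locate the fixed point inside $X_{\infty }$. For any $x\in X_{\infty }$ we have $x\in X_{n}$ for every $n$, hence $Tx\subseteq T(X_{n})\subseteq X_{n+1}$; intersecting over $n$ gives $Tx\subseteq X_{\infty }$. Therefore $T$ restricts to a self-map $T:X_{\infty }\rightarrow P_{cl,cv}(X_{\infty })$ on the nonempty compact convex set $X_{\infty }$. Since $T$ is closed and has closed convex values, its graph is closed and its codomain is compact, so $T$ is upper semicontinuous on $X_{\infty }$; a Kakutani--Bohnenblust--Karlin (multivalued Schauder) fixed point theorem then produces $x^{\ast }\in X_{\infty }$ with $x^{\ast }\in Tx^{\ast }$, which is the desired fixed point.

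I expect the main obstacle to be this last step: one must verify carefully that the abstract ``closed'' condition, in the presence of the compactness of $X_{\infty }$ supplied by the measure-of-noncompactness argument, upgrades to the upper semicontinuity required by the multivalued fixed point theorem, and that the values $Tx$ remain closed and convex after restriction. This is precisely where the hypotheses $T:X\rightarrow P_{cl,cv}(X)$ and the closedness of $T$ are essential, and where some care is needed since the multivalued version of Schauder's theorem, rather than Theorem~\ref{cont}'s single-valued predecessors, must be invoked on $X_{\infty }$.
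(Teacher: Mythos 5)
The paper does not actually prove this theorem: it is quoted from Dhage's paper (the cited reference) and used as a black box, so there is no in-paper argument to compare against. Your reconstruction is the standard Darbo-type iteration that the cited source itself uses --- the decreasing sequence $X_{n+1}=\overline{conv}\,(T(X_{n}))$, the estimate $\mu (X_{n})\leq \varphi ^{n}(\mu (X))\rightarrow 0$ via the Lemma, and a Bohnenblust--Karlin type theorem on the resulting nonempty compact convex set $X_{\infty }$ --- and it is correct, including your key observation that the closedness of $T$ upgrades to upper semicontinuity once the values are confined to the compact set $X_{\infty }$.
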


\bigskip As a consequence of Theorem \ref{cont}, we obtain a fixed point
theorem of Darbo (\cite{darbo}) type for linear set-contractions.

\begin{corollary}[see; \protect\cite{Dhage}]
\label{Dhage}Let $X$ be a bounded, closed and convex subset of a Banach
space $E$ and let \ $T:X\rightarrow P_{cl,cv}(X)$ be a closed and $k$-$set$-$%
contraction$. Then $T$ has a fixed point.
\end{corollary}

\begin{definition}[see; \protect\cite{itoh}]
Let $X$ be a topological space, $2^{X}$ the family of all subsets of $X$ and 
$T$ be a mapping of $X$ into $2^{X}$ such that $Tx$ is nonempty, for all $%
x\in X$. Then the mapping $T$ is called upper semicontinuous if for each
closed subset $C$ of $X$, 
\begin{equation*}
T^{-1}(C)=\left\{ x\in X:Tx\cap C\neq \varnothing \right\}
\end{equation*}%
is closed.
\end{definition}

\begin{definition}[see; \protect\cite{Dhage}]
A mapping $\mu :P_{k}(E)\rightarrow 
\mathbb{R}
^{+}$ is called nondecreasing if $A,B\in P_{k}(E)$ are any two sets with $%
A\subseteq B$, then $\mu (A)\leq \mu (B)$, where $\subseteq $ is order
relation of inclusion in $P_{k}(E)$.
\end{definition}

\begin{lemma}[see; \protect\cite{Thi}]
\label{graph}Let X be a Banach space and $F$ be a Caratheodory multivalued
mapping. Let $\Phi :L^{1}\left( H;X\right) \rightarrow C\left( H;X\right) $
be linear continuous mapping. Then, 
\begin{eqnarray*}
\Phi \circ S_{F}:C\left( H;X\right) &\rightarrow &\mathcal{P}_{cl;cv}\left(
C\left( H;X\right) \right) \\
u &\rightarrow &\left( \Phi \circ S_{F}\right) u:=\Phi \left( S_{F}\left(
u\right) \right) ,
\end{eqnarray*}%
is a closed graph operator in $C\left( H;X\right) \times C\left( H;X\right) $%
.
\end{lemma}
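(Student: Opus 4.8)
The plan is to verify the closed-graph property directly from its definition. Thus I take sequences $u_n \to u$ and $y_n \to y$ in $C(H;X)$ with $y_n \in (\Phi \circ S_F)(u_n)$ for every $n$, and I must produce a selection witnessing $y \in (\Phi \circ S_F)(u)$. By definition of the composition, for each $n$ there is an integrable selection $f_n \in S_F(u_n)$, that is $f_n(t) \in F(t,u_n(t))$ for a.e. $t \in H$, with $\Phi(f_n) = y_n$. The goal is then to extract a limit $f \in S_F(u)$ of the $f_n$ satisfying $\Phi(f) = y$.

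First I would secure compactness in $L^{1}(H;X)$. Since $\{u_n\}$ converges in $C(H;X)$ it is bounded, so the Caratheodory hypothesis on $F$ furnishes an integrable bound for the values $F(t,u_n(t))$ uniform in $n$; hence $\{f_n\}$ is dominated by a fixed integrable function and, by the Dunford--Pettis criterion, is relatively weakly compact in $L^{1}(H;X)$. Passing to a subsequence I may assume $f_n \rightharpoonup f$ weakly in $L^{1}(H;X)$ for some $f \in L^{1}(H;X)$.

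Next I would identify the two limits. Because $\Phi$ is linear and continuous, it is weak-to-weak continuous, so $f_n \rightharpoonup f$ forces $\Phi(f_n) \rightharpoonup \Phi(f)$ in $C(H;X)$; since also $\Phi(f_n) = y_n \to y$ strongly, uniqueness of weak limits yields $\Phi(f) = y$. It remains to check the pointwise inclusion $f(t) \in F(t,u(t))$ for a.e. $t$, which I regard as the crux of the argument. Weak convergence does not respect pointwise constraints, so I would first apply Mazur's lemma to replace $\{f_n\}$ by convex combinations $\tilde{f}_m = \sum_{k \ge m} \lambda_{m,k} f_k$ that converge to $f$ strongly in $L^{1}(H;X)$, and then pass to a subsequence converging to $f(t)$ for a.e. $t$. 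For such $t$, each $\tilde{f}_m(t)$ lies in $\overline{\mathrm{conv}}\,\bigcup_{k \ge m} F(t,u_k(t))$; invoking the upper semicontinuity of $F(t,\cdot)$ together with $u_k(t) \to u(t)$ and the convexity and closedness of the values of $F$, these sets shrink into $F(t,u(t))$, whence $f(t) \in F(t,u(t))$ and therefore $f \in S_F(u)$.

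Combining the two facts, $f \in S_F(u)$ and $\Phi(f) = y$, gives $y \in \Phi(S_F(u)) = (\Phi \circ S_F)(u)$, which is exactly the required closedness of the graph. The main obstacle is precisely this last inclusion: transferring the almost-everywhere membership through a weak limit is impossible without convexification, so the interplay of Dunford--Pettis weak compactness, Mazur's lemma to regain pointwise convergence, and the upper semicontinuity with convex and closed values of $F$ is where the genuine work lies; by contrast, the linearity of $\Phi$ renders the identification $\Phi(f)=y$ comparatively routine.
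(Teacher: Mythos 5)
The paper contains no proof of this lemma: it is imported as a known result from \cite{Thi} and used as a black box in the application section, so there is nothing internal to compare your argument against. What you have written is the classical Lasota--Opial closed-graph argument, and its overall architecture --- extract selections $f_n\in S_F(u_n)$ with $\Phi(f_n)=y_n$, obtain a weak $L^1$ limit $f$, identify $\Phi(f)=y$ by weak-to-weak continuity of the linear continuous $\Phi$, and recover the pointwise inclusion $f(t)\in F(t,u(t))$ via Mazur convex combinations plus upper semicontinuity and closed convex values --- is the standard and correct route. You are also right that the Mazur step is the genuine crux: weak convergence alone cannot be pushed through the a.e.\ constraint.

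Two points need tightening. First, the bare word ``Caratheodory'' (measurable in $t$, continuous or usc in the state variable) does not by itself provide the integrable majorant you invoke; you are implicitly using the $L^1$-Caratheodory growth condition, which is part of the hypotheses in the standard formulations of this lemma but is not stated here, so you should say you are assuming it. Second, and more seriously, the appeal to the Dunford--Pettis criterion is not automatic in $L^{1}(H;X)$ when $X$ is an infinite-dimensional Banach space: uniform integrability of $\{f_n\}$ does not imply relative weak compactness there; one additionally needs, for a.e.\ $t$, relative weak compactness of $\{f_n(t):n\}$ in $X$ (Diestel--Ruess--Schachermayer), which holds when $X$ is reflexive or finite dimensional, or when $F$ has weakly compact values, but is not guaranteed by closed convex values alone. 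Since the lemma is stated for a general Banach space $X$, this is the one step of your outline that requires either an added hypothesis or a different compactness mechanism; the remainder of the argument is sound.
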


\begin{lemma}[see; \protect\cite{KaObZec}]
\label{equimnclem}
\end{lemma}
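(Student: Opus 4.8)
The lemma is the Kamenskii--Obukhovskii--Zecca characterization of the measure of noncompactness on equicontinuous families: if $H$ is a compact interval and $D\subset C(H;X)$ is bounded and equicontinuous, then $t\mapsto\chi\big(D(t)\big)$ is continuous on $H$ and
\[
\chi_{C}(D)=\sup_{t\in H}\chi\big(D(t)\big),
\]
where $D(t)=\{u(t):u\in D\}$, $\chi$ is the Hausdorff measure of noncompactness on $X$, and $\chi_{C}$ the corresponding measure on $C(H;X)$. The plan is to prove the two inequalities separately and then record the continuity of $t\mapsto\chi(D(t))$.

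First I would treat the easy inequality $\sup_{t}\chi(D(t))\le\chi_{C}(D)$. For each fixed $t$ the evaluation map $e_{t}\colon C(H;X)\to X$, $e_{t}(u)=u(t)$, is linear and nonexpansive, and a nonexpansive map does not increase the Hausdorff measure of noncompactness; since $e_{t}(D)=D(t)$ this gives $\chi(D(t))\le\chi_{C}(D)$ for every $t$, and the supremum over $t$ yields the claim.

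The substance is the reverse inequality $\chi_{C}(D)\le\sup_{t}\chi(D(t))=:c$, where equicontinuity enters decisively. Fixing $\varepsilon>0$, I would use equicontinuity to choose $\delta>0$ with $\|u(t)-u(s)\|\le\varepsilon$ for all $u\in D$ whenever $|t-s|\le\delta$, then partition $H=[a,b]$ by nodes $a=\tau_{0}<\cdots<\tau_{m}=b$ of mesh smaller than $\delta$. At each node, $\chi(D(\tau_{i}))\le c$ supplies a finite $(c+\varepsilon)$-net $N_{i}\subset X$ of $D(\tau_{i})$. For every tuple $(y_{0},\dots,y_{m})\in N_{0}\times\cdots\times N_{m}$ I would form the piecewise-linear interpolant $w\in C(H;X)$ with $w(\tau_{i})=y_{i}$, which is legitimate since $X$ is a Banach space; this yields a finite family $\mathcal{N}\subset C(H;X)$. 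Given $u\in D$, choosing $y_{i}\in N_{i}$ with $\|u(\tau_{i})-y_{i}\|\le c+\varepsilon$ and the corresponding interpolant $w$, a triangle inequality through the two endpoints of the subinterval containing $t$ gives $\|u(t)-w(t)\|\le c+2\varepsilon$ for every $t$, hence $\|u-w\|_{\infty}\le c+2\varepsilon$. Thus $\mathcal{N}$ is a finite $(c+2\varepsilon)$-net for $D$, so $\chi_{C}(D)\le c+2\varepsilon$; letting $\varepsilon\to0$ finishes this direction. The continuity of $t\mapsto\chi(D(t))$ then follows from the same modulus: for $|t-s|\le\delta$ one has $d_{H}(D(t),D(s))\le\varepsilon$, and since the Hausdorff measure is $1$-Lipschitz with respect to the Hausdorff distance, $|\chi(D(t))-\chi(D(s))|\le\varepsilon$, so the supremum is attained and may be written as a maximum.

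The main obstacle I anticipate is the reverse inequality: one must ensure the equicontinuity modulus $\delta$ is uniform over all of $D$ (guaranteed by the hypothesis), keep the family $\mathcal{N}$ of interpolants finite, and verify that the nearest-node triangle inequality controls $\|u-w\|_{\infty}$ purely in terms of $c$ and $\varepsilon$. The constant in front of $\varepsilon$ is immaterial since $\varepsilon\to0$, but the bookkeeping of the finite net in $C(H;X)$ is where the argument has to be carried out carefully.
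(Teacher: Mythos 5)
The paper offers no proof of this lemma at all --- it is quoted verbatim from Kamenskii--Obukhovskii--Zecca --- so there is no internal argument to compare against; what matters is whether your reconstruction actually covers the statement. For the first part it does, and correctly: the evaluation maps $e_t$ are nonexpansive, giving $\sup_t\chi(D(t))\le\chi_C(D)$ (and this half needs only boundedness, matching the lemma's assertion that $\mu(A(t))\le\mu(A)$ holds for bounded $A$ without equicontinuity); the reverse inequality via finite $(c+\varepsilon)$-nets at the partition nodes and piecewise-linear interpolants is the standard quantitative Arzel\`a--Ascoli argument, and the $1$-Lipschitz dependence of $\chi$ on the Hausdorff distance gives the continuity of $t\mapsto\chi(D(t))$. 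The bookkeeping you flag (uniform modulus over $D$, finiteness of the interpolant family, the $c+2\varepsilon$ triangle inequality) all goes through.

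The genuine gap is that the lemma has a second clause which you do not address at all: for $A\subset C(H;X)$ bounded and equicontinuous,
\begin{equation*}
\mu\left(\int_0^t A(s)\,ds\right)\leq\int_0^t\mu\left(A(s)\right)ds,\qquad \int_0^t A(s)\,ds=\left\{\int_0^t x(s)\,ds : x\in A\right\}.
\end{equation*}
This is not a restatement of the sup formula; it is the part of the lemma that actually carries the weight in the paper's application (it is what bounds $\mu\bigl(\int_0^t U(t,s)g(s)\,ds\bigr)$ in the proof of the existence theorem for the inclusion system). Its proof requires additional structure beyond what you used: one approximates the integrals uniformly over $x\in A$ by Riemann sums $\sum_i x(s_i)(s_{i+1}-s_i)$, invokes the algebraic semi-additivity $\chi(B+C)\le\chi(B)+\chi(C)$ and positive homogeneity $\chi(\lambda B)=\lambda\chi(B)$ of the Hausdorff measure under Minkowski sums --- properties not among the axioms $(M_1)$--$(M_5)$ listed in the paper --- and then passes to the limit using the continuity of $s\mapsto\chi(A(s))$ established in the first part. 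Without this clause your proposal proves only half of what the paper relies on.
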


$1.$ Let $A\subseteq C\left( H;X\right) $ be bounded. Then $\mu \left(
A\left( t\right) \right) \leqslant \mu \left( A\right) $ for all $t\in H$,
where $A\left( t\right) =\left\{ y\left( t\right) ,y\in A\right\} \subset X$%
. Furthermore, if $A$ is equicontinuous on $H$, then $\mu \left( A\left(
t\right) \right) $ is continuous on $H$ and $\mu \left( A\right) =\sup
\left\{ \mu \left( A\left( t\right) \right) ,t\in H\right\} .$

$2.$ If $A\subset C\left( H;X\right) $ is bounded and equicontinuous, then%
\begin{equation*}
\mu \left( \int_{0}^{t}A\left( s\right) ds\right) \leqslant \int_{0}^{t}\mu
\left( A\left( s\right) \right) ds,
\end{equation*}%
for all $t\in H,$ where $\int_{0}^{t}A\left( s\right) ds=\left\{
\int_{0}^{t}x\left( s\right) ds:x\in A\right\} .$

\section{\protect\bigskip $n$-tuplet Fixed Point Theorems and Some Related
Results}

In this section, we investigate $n$-tuplet fixed point property of a
multivalued mapping and give some applications for special cases $n=2$, that
is, coupled fixed point.

\begin{definition}
\label{tuplet}Let $X$ be a nonempty set and $G:X^{n}\rightarrow P(X)$ be a
given mapping. An element $(x_{1},x_{2},x_{3},...,x_{n})\in X^{n}$ is called
an $n$-tuplet fixed point of $G$ if%
\begin{eqnarray*}
x_{1} &\in &G(x_{1},x_{2},x_{3},...,x_{n}) \\
x_{2} &\in &G(x_{2},x_{3},...,x_{n},x_{1}) \\
&&\vdots \\
x_{n} &\in &G(x_{n},x_{1},x_{2},...,x_{n-1})\text{.}
\end{eqnarray*}
\end{definition}

\begin{remark}
If we take as special cases $n=2$ and $n=3$ in Definition \ref{tuplet},
respectively, we get coupled fixed point (see; \cite{samet}) and tripled
fixed point (see; \cite{Rao}).
\end{remark}

\begin{theorem}
\label{Akm}(see; \cite{akmerov}) Let $\mu _{1},\mu _{2}...,\mu _{n}$ be
measures of noncompactness in Banach spaces $E_{1},E_{2}...,E_{n}$
respectively. Suppose that the function $F:[0,\infty )^{n}\rightarrow
\lbrack 0,\infty )$ is convex and $F(x_{1},x_{2},...,x_{n})=0$ if and only
if $x_{i}=0$ for $i=1,2,...,n$. Then 
\begin{equation*}
\widetilde{\mu }(X)=F(\mu _{1}(X_{1}),\mu _{2}(X_{2})...,\mu _{n}(X_{n})),
\end{equation*}%
defines a measure of noncompactness in $E_{1}\times E_{2}\times ...\times
E_{n}$ where $X_{i}$ denotes the natural projection of $X$ onto $E_{i}$, for 
$i=1,2,...,n.$
\end{theorem}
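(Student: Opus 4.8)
The plan is to verify the three defining conditions of a measure of noncompactness for $\widetilde{\mu}$ as given in the Definition cited from \cite{Dhage}, namely the regularity condition (M$_1$), invariance under closure and convex hull (M$_2$, M$_3$), monotonicity (M$_4$), and the generalized Cantor intersection property (M$_5$). The key structural facts I would exploit are: first, that a set $X\subseteq E_1\times\cdots\times E_n$ is relatively compact if and only if each projection $X_i$ is relatively compact in $E_i$ (Tychonoff-type argument for finite products); second, that the natural projections commute appropriately with closure, convex hull, and decreasing intersections; and third, the hypotheses that $F$ is convex and satisfies $F(x_1,\dots,x_n)=0 \iff x_i=0$ for all $i$.

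First I would establish (M$_1$). Suppose $\widetilde{\mu}(X)=F(\mu_1(X_1),\dots,\mu_n(X_n))=0$. By the equivalence hypothesis on $F$, this forces $\mu_i(X_i)=0$ for every $i$, so by (M$_1$) applied to each $\mu_i$ we get $X_i\in P_{rcp}(E_i)$; the product of finitely many relatively compact sets is relatively compact, and since $X\subseteq X_1\times\cdots\times X_n$, we conclude $X\in P_{rcp}(E_1\times\cdots\times E_n)$. Conversely nonemptiness of $\widetilde{\mu}^{-1}(0)$ follows by taking a singleton. Next, for (M$_4$), if $X\subseteq Y$ then each projection satisfies $X_i\subseteq Y_i$, so $\mu_i(X_i)\le\mu_i(Y_i)$ by monotonicity of $\mu_i$; since $F$ is convex on $[0,\infty)^n$ and vanishes only at the origin, it is coordinatewise nondecreasing, whence $\widetilde{\mu}(X)\le\widetilde{\mu}(Y)$.

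For (M$_2$) and (M$_3$) I would use that the projection of the closure is contained in the closure of the projection, $\overline{X}_i\subseteq\overline{X_i}$, together with continuity of projections to get the reverse inclusion, yielding $\mu_i(\overline{X}_i)=\mu_i(X_i)$ via (M$_2$) for $\mu_i$; the convex-hull case is analogous using that the projection of a convex hull equals the convex hull of the projection, and invoking (M$_3$) for each $\mu_i$. Finally, for (M$_5$), given a decreasing sequence $\{A^{(m)}\}$ in $P_{cl,bd}(E_1\times\cdots\times E_n)$ with $\widetilde{\mu}(A^{(m)})\to 0$, the equivalence property of $F$ combined with its continuity (convex functions on $[0,\infty)^n$ are continuous on the interior, and one checks behavior at the boundary) forces $\mu_i(A^{(m)}_i)\to 0$ for each $i$; applying (M$_5$) to each coordinate gives nonempty intersections $A^{(\infty)}_i$, and one assembles a point of $\bigcap_m A^{(m)}$ from these coordinatewise limits.

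The main obstacle I anticipate is the passage in (M$_5$) from $\widetilde{\mu}(A^{(m)})=F(\mu_1(A^{(m)}_1),\dots,\mu_n(A^{(m)}_n))\to 0$ to the conclusion that \emph{each} coordinate measure tends to zero. This does not follow from the zero-set characterization of $F$ alone; it requires a coercivity- or lower-bound-type control on $F$ near the axes, which must be extracted from convexity together with $F^{-1}(0)=\{0\}$. I would argue that along a sequence where $\widetilde{\mu}(A^{(m)})\to 0$ but some $\mu_i(A^{(m)}_i)$ stays bounded away from $0$, one could pass to a subsequential limit of the (bounded, by the decreasing property) vector of coordinate measures and obtain a nonzero argument on which $F$ vanishes, contradicting the hypothesis. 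Making this limiting argument rigorous — in particular justifying that the coordinate measures remain bounded and that $F$ is continuous up to the relevant boundary points — is the delicate step; everything else reduces to the coordinatewise behavior of projections and the axioms satisfied by the individual $\mu_i$.
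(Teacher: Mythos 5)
The paper does not prove this statement at all: it is imported verbatim from Akhmerov et al.\ with a ``see'' citation, so there is no in-paper argument to compare yours against. Judged on its own, your plan correctly identifies (M$_2$), (M$_3$) and the zero-set part of (M$_1$) as routine coordinatewise checks (and those parts of your sketch are fine), but the two steps you lean on for (M$_4$) and (M$_5$) contain genuine errors rather than mere gaps in detail.

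For (M$_4$) you assert that a convex $F:[0,\infty)^n\to[0,\infty)$ with $F^{-1}(0)=\{0\}$ is automatically coordinatewise nondecreasing. This is false: $F(x,y)=(x-y)^2+x+y$ is convex, nonnegative, and vanishes only at the origin, yet $F(0,1)=2>7/4=F(1/2,1)$, so monotonicity of $\widetilde{\mu}$ cannot be derived this way (and indeed fails for this $F$ if one takes $X_1=\{0\}\subset Y_1=\tfrac12\bar{B}$ and $X_2=Y_2=\bar{B}$ in an infinite-dimensional space). For (M$_5$), the step you yourself flag as delicate is not merely delicate but unobtainable from the stated hypotheses: the passage from $F(c^{(m)})\to 0$ to $c^{(m)}\to 0$ requires lower semicontinuity of $F$ at boundary points of $[0,\infty)^n$, and finite convex functions on the orthant need only be \emph{upper} semicontinuous there. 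Concretely, $F(x,y)=y+x\chi_{\{y=0\}}$ is convex on $[0,\infty)^2$ with $F^{-1}(0)=\{0\}$, yet $F(\alpha,1/m)\to 0$ while the first coordinate stays at $\alpha>0$; taking $A^{(m)}=C_m\times\frac1m\bar{B}$ in $c_0\times c_0$ with $C_m=\{e_k:k\geq m\}$ gives a decreasing sequence of closed bounded sets with $\widetilde{\mu}(A^{(m)})\to 0$ and empty intersection, so (M$_5$) genuinely fails. In short, under the axiom system (M$_1$)--(M$_5$) adopted in this paper the theorem needs $F$ to be, in addition, coordinatewise nondecreasing and suitably continuous (or coercive off the origin); your proof strategy cannot succeed without adding such a hypothesis. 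Both conditions do hold for the only two instances the paper actually uses, $F=\max$ and $F=\sum$, for which every step of your outline becomes correct. A final smaller point: even granting $\mu_i(A^{(m)}_i)\to 0$, you cannot ``assemble'' a point of $\bigcap_m A^{(m)}$ from points of the coordinatewise intersections, since the $A^{(m)}$ need not be product sets; the standard repair is to pick $a^{(m)}\in A^{(m)}$, note that each coordinate sequence lies in sets of vanishing measure of noncompactness and hence is relatively compact, and extract a convergent subsequence whose limit lies in every (closed) $A^{(m)}$.
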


\begin{remark}
\label{rem}Notice that by taking%
\begin{equation*}
F\left( x_{1},x_{2},x_{3},...,x_{n}\right) =\max \left\{
x_{1},x_{2},x_{3},...,x_{n}\right\} ,
\end{equation*}%
or%
\begin{equation*}
F\left( x_{1},x_{2},x_{3},...,x_{n}\right) =x_{1}+x_{2}+x_{3}+...+x_{n},
\end{equation*}%
for any $\left( x_{1},x_{2},x_{3},...,x_{n}\right) \in \left[ 0,\infty
\right) ^{n}$, the conditions of Theorem \ref{Akm} are satisfied. Therefore,%
\begin{equation*}
\widetilde{\mu }\left( X\right) :=\max \left( \mu \left( X_{1}\right) ,\mu
\left( X_{2}\right) ,...,\mu \left( X_{n}\right) \right) ,
\end{equation*}%
or 
\begin{equation*}
\widetilde{\mu }\left( X\right) :=\mu \left( X_{1}\right) +\mu \left(
X_{2}\right) +\cdots +\mu \left( X_{n}\right) ,
\end{equation*}%
\ defines measures of noncompactness in the space $E^{n},$ where $X_{i}$, $%
i=1,2,...,n$ are the natural projections of $X$ on $E_{i}$.
\end{remark}

We now give an important theorem for existence of fixed point of multivalued
mapping under measure of noncompactness condition.

\begin{theorem}
\label{teo11}Let $X$ be a nonempty, bounded, closed and convex subset of a
Banach space $E$\ and let $\mu $ \ be an arbitrary measure of noncompactness
in it. Let $\varphi :\mathbb{R}^{+}\rightarrow \mathbb{R}^{+}$ be a
nondecreasing and upper semicontinuous function such that $\varphi \left(
r\right) <r$ for all $r>0.$ Suppose that $G:X_{1}\times X_{2}\times \cdots
\times X_{n}\rightarrow P_{cl,cv}(X)$ is continuous multivalued operator
satisfying%
\begin{equation*}
\mu \left( G\left( X_{1}\times X_{2}\times \cdots \times X_{n}\right)
\right) \leqslant \varphi \left( \frac{\mu \left( X_{1}\right) +\mu \left(
X_{2}\right) +\cdots +\mu \left( X_{n}\right) }{n}\right)
\end{equation*}%
for all $X_{1},X_{2},...,X_{n}\subset X$. Then $G$ has at least one $n$%
-tuplet fixed point.
\end{theorem}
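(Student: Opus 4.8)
The plan is to lift the $n$-tuplet fixed point problem for $G$ to an ordinary (single-valued-style) fixed point problem for an associated multivalued operator on the product space $E^{n}$, and then to invoke Theorem \ref{cont}. First I would define $\widetilde{G}:X^{n}\rightarrow P(X^{n})$ by
\begin{equation*}
\widetilde{G}(x_{1},\dots ,x_{n})=G(x_{1},x_{2},\dots ,x_{n})\times G(x_{2},x_{3},\dots ,x_{n},x_{1})\times \cdots \times G(x_{n},x_{1},\dots ,x_{n-1}),
\end{equation*}
so that $(x_{1},\dots ,x_{n})\in \widetilde{G}(x_{1},\dots ,x_{n})$ holds if and only if, componentwise, the defining inclusions of Definition \ref{tuplet} hold. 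Hence an $n$-tuplet fixed point of $G$ is exactly a fixed point of $\widetilde{G}$. On $E^{n}$ I would equip the problem with the measure of noncompactness
\begin{equation*}
\widetilde{\mu }(Y)=\frac{1}{n}\bigl(\mu (Y_{1})+\mu (Y_{2})+\cdots +\mu (Y_{n})\bigr),
\end{equation*}
where $Y_{i}$ denotes the $i$-th natural projection of $Y$; this is a measure of noncompactness by Theorem \ref{Akm} applied to the convex function $F(x_{1},\dots ,x_{n})=\tfrac{1}{n}\sum_{i}x_{i}$, in the spirit of Remark \ref{rem}. The averaging (rather than summing) is chosen precisely so that the argument of $\varphi$ in the hypothesis matches $\widetilde{\mu }$.

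Next I would verify the structural hypotheses of Theorem \ref{cont}. The set $X^{n}$ is nonempty, bounded, closed and convex in $E^{n}$ because $X$ has these properties in $E$. Since $G$ takes values in $P_{cl,cv}(X)$, each factor of $\widetilde{G}(x_{1},\dots ,x_{n})$ is a nonempty closed convex subset of $X$, and a finite product of closed convex sets is again closed and convex, so $\widetilde{G}$ maps $X^{n}$ into $P_{cl,cv}(X^{n})$. The closedness of $\widetilde{G}$ would be deduced from the continuity of $G$: if $(x^{(k)},y^{(k)})\to (x,y)$ with $y^{(k)}\in \widetilde{G}(x^{(k)})$, then each component $y^{(k)}_{i}$ lies in the corresponding cyclically shifted image of $G$, and passing to the limit using the closed values and continuity of $G$ yields $y_{i}$ in that image, whence $y\in \widetilde{G}(x)$.

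The heart of the argument is the $D$-set-contraction estimate for $\widetilde{G}$. For bounded $Y\subseteq X^{n}$ with projections $Y_{1},\dots ,Y_{n}$, the $i$-th projection of $\widetilde{G}(Y)=\bigcup_{y\in Y}\widetilde{G}(y)$ is contained in the cyclically shifted union $G(Y_{i}\times Y_{i+1}\times \cdots \times Y_{i-1})$ (indices taken cyclically), so by the monotonicity of $\mu$ together with the hypothesis on $G$,
\begin{equation*}
\mu \bigl((\widetilde{G}(Y))_{i}\bigr)\leqslant \mu \bigl(G(Y_{i}\times \cdots \times Y_{i-1})\bigr)\leqslant \varphi \!\left( \frac{\mu (Y_{1})+\cdots +\mu (Y_{n})}{n}\right) =\varphi (\widetilde{\mu }(Y)),
\end{equation*}
where I use that the sum $\mu (Y_{1})+\cdots +\mu (Y_{n})$ is invariant under cyclic permutation. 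Averaging over $i=1,\dots ,n$ gives $\widetilde{\mu }(\widetilde{G}(Y))\leqslant \varphi (\widetilde{\mu }(Y))$, so $\widetilde{G}$ is a nonlinear $D$-set-contraction on $X^{n}$ with the same $D$-function $\varphi$; note that $\varphi (0)=0$ follows automatically since $\varphi$ is nondecreasing with $\varphi (r)<r$ for $r>0$.

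Finally, having checked that $\widetilde{G}:X^{n}\rightarrow P_{cl,cv}(X^{n})$ is a closed nonlinear $D$-set-contraction on the nonempty bounded closed convex set $X^{n}$, Theorem \ref{cont} yields a fixed point $(x_{1},\dots ,x_{n})\in \widetilde{G}(x_{1},\dots ,x_{n})$, which is precisely the desired $n$-tuplet fixed point of $G$. I expect the main obstacle to be the careful handling of the projection step, namely establishing the inclusion $(\widetilde{G}(Y))_{i}\subseteq G(Y_{i}\times \cdots \times Y_{i-1})$ and transferring the closed-graph property from $G$ to $\widetilde{G}$; once these are in place, the contraction estimate is a direct consequence of the monotonicity of $\mu$ and the cyclic symmetry of $\widetilde{\mu }$.
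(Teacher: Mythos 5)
Your proposal is correct and follows essentially the same route as the paper: lift $G$ to the cyclic-shift operator $\widetilde{G}$ on $X^{n}$, equip $E^{n}$ with the additive measure of noncompactness of Remark \ref{rem} normalized by $1/n$ so that the estimate becomes $\widetilde{\mu }(\widetilde{G}(Y))\leqslant \varphi (\widetilde{\mu }(Y))$, and conclude via Theorem \ref{cont}. If anything, your write-up is more careful than the paper's: you define $\widetilde{G}$ explicitly as the product of the cyclically shifted images, verify that its values are closed and convex and that it has closed graph, and justify the projection inclusion $(\widetilde{G}(Y))_{i}\subseteq G(Y_{i}\times \cdots \times Y_{i-1})$, all of which the paper leaves implicit.
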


\begin{proof}
As in Remark \ref{rem}, we define the measure of noncompactness $\tilde{\mu}$
by 
\begin{equation*}
\widetilde{\mu }\left( X\right) :=\mu \left( X_{1}\right) +\mu \left(
X_{2}\right) +\cdots +\mu \left( X_{n}\right) .
\end{equation*}%
Define the mapping $\tilde{G}(X):=G\left( X_{1}\times X_{2}\times \cdots
\times X_{n}\right) $. We prove that $\widetilde{G}$ satisfies all the
conditions of Theorem \ref{cont}. Then

Clearly, 
\begin{eqnarray*}
\widetilde{\mu }\left( \widetilde{G}\left( X\right) \right) &=&\widetilde{%
\mu }\left( G\left( X_{1}\times X_{2}\times \cdots \times X_{n}\right)
\right) \\
&=&\widetilde{\mu }\left(
G(x_{1},x_{2},x_{3},...,x_{n}),G(x_{2},x_{3},...,x_{n},x_{1}),...,G(x_{n},x_{1},x_{2},...,x_{n-1})\right)
\\
&=&\mu \left( G(x_{1},x_{2},x_{3},...,x_{n}))+\mu
(G(x_{2},x_{3},...,x_{n},x_{1}))+\cdots +\mu
(G(x_{n},x_{1},x_{2},...,x_{n-1})\right) \\
&\leqslant &\varphi \left( \frac{\mu \left( X_{1}\right) +\mu \left(
X_{2}\right) +\cdots +\mu \left( X_{n}\right) }{n}\right) +\varphi \left( 
\frac{\mu \left( X_{2}\right) +\mu \left( X_{3}\right) +\cdots +\mu \left(
X_{1}\right) }{n}\right) \\
&&+\cdots +\varphi \left( \frac{\mu \left( X_{n}\right) +\mu \left(
X_{1}\right) +\cdots +\mu \left( X_{n-1}\right) }{n}\right) \\
&=&n\varphi \left( \frac{\mu \left( X_{1}\right) +\mu \left( X_{2}\right)
+\cdots +\mu \left( X_{n}\right) }{n}\right) \text{.}
\end{eqnarray*}%
Now,%
\begin{equation*}
\frac{1}{n}\widetilde{\mu }\left( \widetilde{G}\left( X\right) \right) \leq
\varphi \left( \frac{\mu \left( X_{1}\right) +\mu \left( X_{2}\right)
+\cdots +\mu \left( X_{n}\right) }{n}\right)
\end{equation*}%
and taking $\widetilde{\mu }^{\prime }=\frac{1}{n}\widetilde{\mu },$ we get 
\begin{equation*}
\widetilde{\mu }^{\prime }\left( \widetilde{G}\left( X\right) \right)
\leqslant \varphi \left( \widetilde{\mu }^{\prime }\left( X\right) \right) 
\text{.}
\end{equation*}%
Also, $\widetilde{\mu }^{\prime }$ is a measure of noncompactness. Thus, by
Theorem \ref{cont}, we obtain that $G$ has at least one $n$-tuplet fixed
point.
\end{proof}

\begin{remark}
If we take $\mu $ measure of noncompactness in Theorem \ref{teo11} as 
\begin{equation*}
\widetilde{\mu }\left( X\right) :=\max \left( \mu \left( X_{1}\right) ,\mu
\left( X_{2}\right) ,...,\mu \left( X_{n}\right) \right) .
\end{equation*}%
We can obtain the same result.
\end{remark}

\section{Application To Inclusions Systems}

The multivalued fixed point theorem of this paper has some nice applications
to differential and integral systems of inclusions as an example we study
the solvability of a system of differential inclusions.

Consider the following differential system%
\begin{equation}
\left\{ 
\begin{array}{c}
x^{\prime }(t)\in A(t)x(t)+G(t,x\left( t\right) ,y\left( t\right) ),\ \ t\in 
\left[ 0,b\right] \\ 
y^{\prime }(t)\in A(t)y(t)+F(t,y\left( t\right) ,x\left( t\right) ),\ \ t\in 
\left[ 0,b\right]%
\end{array}%
\right.  \label{evsys}
\end{equation}%
with%
\begin{equation}
x(0)=\varphi (x,y),\text{ }y(0)=\varphi (y,x)\text{\ \ }  \label{evcond}
\end{equation}%
where $G$ is an upper Caratheodory multimap, $\varphi :C\left( \left[ 0,b%
\right] ,X\right) \rightarrow X$ is a given multivalued function. $\left\{
A\left( t\right) :t\in \left[ 0,b\right] \right\} $ is a family of linear
closed unbounded operators on $X$ with domain $D(A(t))$ independent of $t$
that generate $\Delta $ an evolution system of operators $\left\{ U\left(
t,s\right) :t,s\in \Delta \right\} $ with $\Delta =\left\{ \left( t,s\right)
\in \left[ 0,b\right] \times \left[ 0,b\right] :0\leqslant s\leqslant
t\leqslant b\right\} $.

Define the set%
\begin{equation*}
S_{G}\left( x,y\right) =\left\{ g\in L^{1}\left( \left[ 0,b\right] ,X\right)
:g\left( t\right) \in G\left( t,x\left( t\right) ,y\left( t\right) \right)
\right\} .
\end{equation*}

\begin{definition}
A family $\{U(t,s)\}_{(t,s)\in \Delta }$ of bounded linear operators $%
U(t,s):X\rightarrow X$ where $(t,s)\in \Delta :=\{(t,s)\in J\times
J:0\leqslant s\leqslant t<+\infty \}$ for $J=\left[ 0,b\right] $ is called
an evolution system if the following properties are satisfied,

\begin{enumerate}
\item $U(t,t)=I$ where $I$ is the identity operator in $X$ and $U(t,s)\
U(s,\tau )=U(t,\tau )$ for $0\leqslant \tau \leqslant s\leqslant t<+\infty $,

\item The mapping $(t,s)\rightarrow U(t,s)\ y$ is strongly continuous, that
is, there exists a constant $M>0$ such that 
\begin{equation*}
\Vert U(t,s)\Vert \leqslant M\ \ \text{for any }(t,s)\in \Delta .
\end{equation*}
\end{enumerate}
\end{definition}

An evolution system $U(t,s)$ is said to be compact if $U(t,s)$ is compact
for any $t$-$s>0$. $U(t,s)$ is said to be equicontinuous if $\left\{
U(t,s)x:x\in M\right\} $ is equicontinuous at $0\leqslant s<t\leqslant b$
for any bounded subset $B\subset X$. Clearly, if $U(t,s)$ is a compact
evolution system, it must be equicontinuous. The converse is not necessarily
true.

More details on evolution systems and their properties could be found in the
books of Ahmed \cite{Ah}, Engel and Nagel \cite{EnNa} and Pazy \cite{pazy}.

\begin{definition}
We say that the couple\ $\left( x\left( t\right) ,y(t)\right) \in C\left( %
\left[ 0,b\right] ,X\right) \times C\left( \left[ 0,b\right] ,X\right) $ is
a mild solution of the evolution system $\left( \text{\ref{evsys}}\right)
-\left( \text{\ref{evcond}}\right) $ if it satisfies the following integral
system 
\begin{equation}
\left\{ 
\begin{array}{c}
x(t)=U(t,0)\ \varphi (x,y)+\int_{0}^{t}U(t,s)\ g(s)\ ds\text{ \ \ for }g\in
S_{G}\left( x,y\right) \\ 
y(t)=U(t,0)\ \varphi (y,x)+\int_{0}^{t}U(t,s)\ g(s)\ ds\text{ \ \ for }g\in
S_{G}\left( y,x\right)%
\end{array}%
,\right.  \label{evmild}
\end{equation}%
for all $t\in \left[ 0,b\right] $.
\end{definition}

\begin{theorem}
Assume the following hypotheses
\end{theorem}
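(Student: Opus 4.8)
The plan is to recast the existence of a mild solution of \eqref{evsys}--\eqref{evcond} as a coupled (that is, $n=2$) fixed point of a single multivalued solution operator, and then to invoke Theorem \ref{teo11} with $n=2$. Working in the Banach space $C([0,b],X)$ with its measure of noncompactness $\mu$, I would define the multioperator $\mathcal{N}:C([0,b],X)\times C([0,b],X)\rightarrow P(C([0,b],X))$ by
\begin{equation*}
\mathcal{N}(x,y)=\Big\{u\in C([0,b],X):u(t)=U(t,0)\,\varphi(x,y)+\int_0^t U(t,s)\,g(s)\,ds,\ g\in S_G(x,y)\Big\}.
\end{equation*}
By construction, a pair $(x,y)$ with $x\in\mathcal{N}(x,y)$ and $y\in\mathcal{N}(y,x)$ is exactly a mild solution in the sense of \eqref{evmild} (this is the $n=2$ instance of Definition \ref{tuplet}), so it suffices to produce a coupled fixed point of $\mathcal{N}$.

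First I would verify the structural hypotheses. The values $\mathcal{N}(x,y)$ are convex because $S_G(x,y)$ is convex (a consequence of the convex values of the Caratheodory multimap $G$) and $g\mapsto U(t,0)\varphi(x,y)+\int_0^t U(t,s)g(s)\,ds$ is affine; they are closed by the standard argument combining the $L^1$-closedness of $S_G$ with continuity of the integral operator. Using the assumed growth condition on the selections of $G$ together with $\|U(t,s)\|\le M$, I would next fix a radius $r>0$ so that the closed ball $B_r$ of $C([0,b],X)$ satisfies $\mathcal{N}(B_r\times B_r)\subseteq B_r$; take $X_0=B_r$, which is nonempty, bounded, closed and convex. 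Equicontinuity of $\mathcal{N}(X_0\times X_0)$ then follows from the equicontinuity of the evolution system and the uniform $L^1$-bound on the selections $g$, splitting $\int_0^{t'}-\int_0^{t}$ in the usual way.

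The continuity (indeed the closed-graph property) of $\mathcal{N}$ is where Lemma \ref{graph} enters: writing $\Phi:L^1([0,b],X)\to C([0,b],X)$ for the linear continuous operator $(\Phi g)(t)=\int_0^t U(t,s)g(s)\,ds$, the composition $\Phi\circ S_G$ has closed graph, and adding the continuous term $U(t,0)\varphi(x,y)$ preserves this. Combined with the invariant bounded set $X_0$, this yields the upper semicontinuity, hence the continuity, required by Theorem \ref{teo11}.

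The crux is the measure-of-noncompactness estimate. For a bounded $D\subset X_0\times X_0$ with projections $D_1,D_2$, I would apply part~2 of Lemma \ref{equimnclem} to pass $\mu$ inside the integral, and the assumed regularity bound $\mu\big(G(s,D_1(s),D_2(s))\big)\le \ell(s)\big(\mu(D_1(s))+\mu(D_2(s))\big)$ for some $\ell\in L^1$, to get for each $t\in[0,b]$
\begin{equation*}
\mu\big(\mathcal{N}(D)(t)\big)\le M\int_0^t \ell(s)\big(\mu(D_1(s))+\mu(D_2(s))\big)\,ds .
\end{equation*}
By part~1 of Lemma \ref{equimnclem}, since $\mathcal{N}(D)$ is equicontinuous, $\mu(\mathcal{N}(D))=\sup_t\mu(\mathcal{N}(D)(t))$, while $\mu(D_i(s))\le\mu(D_i)$. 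The main obstacle is converting this into the contractive form $\mu(\mathcal{N}(D))\le\psi\big(\tfrac{\mu(D_1)+\mu(D_2)}{2}\big)$ with $\psi(r)<r$ demanded by Theorem \ref{teo11}, since the bare constant $M\|\ell\|_{L^1}$ need not be small. I would resolve this exactly as in the scalar Darbo--evolution theory, by replacing the sup-norm with an equivalent Bielecki-type weighted norm $\|u\|_\tau=\sup_t e^{-\tau\int_0^t\ell(s)\,ds}\|u(t)\|$ (equivalently, using the associated weighted measure of noncompactness), which for $\tau$ large enough drives the effective Lipschitz constant strictly below $1$ and furnishes a $D$-function $\psi$ with $\psi(r)<r$ that plays the role of $\varphi$ in Theorem \ref{teo11}. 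With this estimate in hand, $\mathcal{N}$ meets every hypothesis of Theorem \ref{teo11} for $n=2$, so it admits a coupled fixed point $(x,y)$, which is the sought mild solution of \eqref{evsys}--\eqref{evcond}.
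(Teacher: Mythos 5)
Your construction of the solution multioperator, the reduction to a coupled ($n=2$) fixed point, the verification of convex and closed values, the invariance of the ball $A_0$ via $(H3)$, the equicontinuity argument, and the use of Lemma \ref{graph} for the closed graph together with Lemma \ref{equimnclem} for pushing $\mu$ through the integral all coincide with the paper's proof. Where you part ways is the final contraction estimate. The paper bounds $\mu \left( N\left( W\times W\right) \right)$ by $M\mu \left( \varphi (W\times W)\right) +Mbk\,\mu \left( \frac{W\times W}{2}\right)$, kills the first term using the compactness of $\varphi$ from $(H2)$, and then simply requires $Mbk<1$ --- a smallness condition that is not listed among $(H1)$--$(H3)$ but is invoked only in the last line. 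You instead upgrade $(H2)$ to a pointwise-in-time bound with an $L^{1}$ kernel $\ell$ and remove the smallness requirement by a Bielecki-type weighted renorming of $C\left( \left[ 0,b\right] ,X\right)$ (equivalently, a weighted measure of noncompactness), which for $\tau$ large produces a genuine $D$-function with $\psi (r)<r$. Each route buys something: the paper's is shorter but silently adds the hypothesis $Mbk<1$; yours dispenses with that smallness condition but silently replaces $(H2)$ by a stronger, local-in-$t$ integrable-kernel version, and you would still need to check that the weighted quantity $\sup_{t}e^{-\tau \int_{0}^{t}\ell (s)ds}\mu \left( D\left( t\right) \right)$ satisfies axioms $(M_{1})$--$(M_{5})$ (it does, being equivalent to $\mu$ on equicontinuous families, but Theorem \ref{teo11} is stated for an arbitrary measure of noncompactness, so this must be said). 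One small omission on your side: in the displayed estimate for $\mu \left( \mathcal{N}\left( D\right) \left( t\right) \right)$ you drop the nonlocal term $U\left( t,0\right) \varphi (x,y)$ without comment; as in the paper, its contribution to the measure of noncompactness vanishes precisely because $\varphi$ is assumed compact in $(H2)$, and that observation should appear explicitly.
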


\begin{itemize}
\item[$\left( H1\right) $] $\left\{ A\left( t\right) :t\in J\right\} $ is a
family of linear operators. $A\left( t\right) :D\left( A\right) \subset
X\rightarrow X$ generates an equicontinuous evolution system $\left\{
U\left( t,s\right) :\left( t,s\right) \in \Delta \right\} $ and%
\begin{equation*}
\left\vert U\left( t,s\right) \right\vert \leqslant M.
\end{equation*}
\end{itemize}

\begin{itemize}
\item[$(H2)$] The multifunction $G:J\times C(\left[ 0,b\right] \times
X\times X)\longrightarrow \mathcal{P}_{cl,cv}(X)$ is an upper Carath\'{e}%
odory with respect to $x$ and $y$ and $\varphi :C(J;X)\rightarrow X$ is
compact and 
\begin{equation*}
\mu \left( G\left( t,W\times W\right) \right) <k\mu \left( \frac{W\times W}{2%
}\right) \ \text{for any }t\in J.
\end{equation*}

\item[$\left( H3\right) $] There exists a constant $r>0$ such that 
\begin{equation*}
M\left[ \left\Vert \varphi \left( x,y\right) \right\Vert +\left\{ \left\Vert
g\left( t\right) \right\Vert _{1}:g\in S_{G}\left( x,y\right) ,x\in
A_{0}\right\} \right] \leqslant r
\end{equation*}

and 
\begin{equation*}
\ M\left[ \left\Vert \varphi \left( y,x\right) \right\Vert +\left\{
\left\Vert g\left( t\right) \right\Vert _{1}:g\in S_{G}\left( y,x\right)
,y\in A_{0}\right\} \right] \leqslant r
\end{equation*}
where, $A_{0}=\left\{ z\in C(J;X):\left\Vert z\left( t\right) \right\Vert
\leqslant r\text{ for all }t\in J\right\} $ hold. Then the non local system $%
\left( \text{\ref{evsys}}\right) -\left( \text{\ref{evcond}}\right) $ has at
least one mild solution in the space $C\left( J,X\right) $.
\end{itemize}

\textbf{Proof.} To solve problem given in $\left( \text{\ref{evsys}}\right)
-\left( \text{\ref{evcond}}\right) ,$ we transform it into the following
fixed point problem.

Consider the multivalued operator $N:C(\left[ 0,b\right] ;X;X)\rightarrow 
\mathcal{P}(C(\left[ 0,b\right] ;X))$ defined by, 
\begin{equation*}
N(x,y)=\left\{ h\in C(J;X):h(t)=U(t,0)\varphi (x,y)+\int_{0}^{t}U(t,s)\
g(s)\ ds,\text{ with }g\in S_{G}\left( x,y\right) \right\} .
\end{equation*}

Clearly, coupled fixed points of the operator $N$ are mild solutions of
system $\left( \text{\ref{evcond}}\right) -$ $($\ref{evmild}$)$.

Obviously, for each $y\in C(\left[ 0,b\right] ;X)$, the set $S_{G}\left(
x,y\right) $ is nonempty since, by $(H2)$, $G$ has a measurable selection
(see \cite{CaVa}).

Let show that $N$ has a coupled fixed point. For that, we need to verify all
the conditions of Theorem \ref{teo11}.

Let $A_{0}=\left\{ z\in C(\left[ 0,b\right] ;X):\left\Vert z\left( t\right)
\right\Vert \leqslant r\text{ for all }t\in \left[ 0,b\right] \right\} $. We
notice that $A_{0}$ is closed, bounded and convex.

To show that $N\left( A_{0}\times A_{0}\right) \subseteq A_{0}$, we need
first to prove that the family 
\begin{equation*}
\left\{ \int_{0}^{t}U(t,s)\ f(s)\ ds\ :f\in S_{F}\left( y\right) \text{ and }%
y\in A_{0}\right\}
\end{equation*}%
is equicontinuous for $t\in J$, that is, all the functions are continuous
and they have equal variation over a given neighbourhood.

In view of $\left( H1\right) $ we have that funtions in the set $\left\{
U\left( t,s\right) :\left( t,s\right) \in \Delta \right\} $ are
equicontinuous, i.e, for every $\varepsilon >0$ there exists $\delta >0$
such that $\left\vert t-\tau \right\vert <\delta $ implies $\left\Vert
U\left( t,s\right) -u\left( \tau ,s\right) \right\Vert <\varepsilon $ for
all $U\left( t,s\right) \in \left\{ U\left( t,s\right) :\left( t,s\right)
\in \Delta \right\} .$

Then, given some $\varepsilon >0$ let $\delta =\frac{\varepsilon ^{\prime }}{%
\varepsilon \left\Vert g\right\Vert _{\infty }}$ such that $\left\vert
t-\tau \right\vert <\delta $, we have%
\begin{equation*}
\left\vert \int_{0}^{t}U(t,s)\ g(s)\ ds-\int_{0}^{\tau }U(\tau ,s)gf(s)\
ds\right\vert \leqslant \int_{\tau }^{t}\left\vert U\left( t,s\right)
-U\left( \tau ,s\right) \right\vert \left\vert g\left( s\right) \right\vert
ds.
\end{equation*}%
As $\left\{ U\left( t,s\right) :\left( t,s\right) \in \Delta \right\} $ is
equicontinuous, so we have%
\begin{eqnarray*}
\left\vert \int_{0}^{t}U(t,s)\ g(s)\ ds-\int_{0}^{\tau }U(\tau ,s)\ g(s)\
ds\right\vert &\leqslant &\varepsilon \left\Vert g\right\Vert _{\infty
}\left\vert t-\tau \right\vert \\
&<&\varepsilon \left\Vert g\right\Vert _{\infty }\frac{\varepsilon ^{\prime }%
}{\varepsilon \left\Vert g\right\Vert _{\infty }}=\varepsilon ^{\prime }.
\end{eqnarray*}%
Hence we conclude that $\left\{ \int_{0}^{t}U(t,s)\ g(s)\ ds\ :g\in
S_{G}\left( x,y\right) \text{ and }\left( x,y\right) \in A_{0}\times
A_{0}\right\} $ is equicontinuous for $t\in J$.

Now, we show that $N\left( A_{0}\times A_{0}\right) \subseteq A_{0}$. For $%
t\in J$, we have%
\begin{eqnarray*}
\left\vert h\left( t\right) \right\vert &=&\left\vert U\left( t,0\right)
\varphi (x,y)+\int_{0}^{t}U(t,s)\ g(s)\ ds\right\vert \\
&\leqslant &\left\vert U\left( t,0\right) \varphi (x,y)\right\vert
+\int_{0}^{t}\left\vert U(t,s)\ g(s)\right\vert ds \\
&\leqslant &M\left\Vert \varphi \left( x,y\right) \right\Vert +M\left\Vert
g\right\Vert _{1} \\
&=&M\left[ \left\Vert \varphi \left( x,y\right) \right\Vert +\left\Vert
g\right\Vert _{1}\right] \leqslant r.
\end{eqnarray*}%
Thus $N\left( A_{0}\times A_{0}\right) \subseteq A_{0}$.

Further, it is easy to see that $N$ is convex value.

Now, let us show that $N$ has a closed graph, let $x_{n}\rightarrow x$, $%
y_{n}\rightarrow y$ and $h_{n}\rightarrow h$ such that $h_{n}\left( t\right)
\in N\left( x_{n},y_{n}\right) $ and we show that $h\left( t\right) \in
N\left( x,y\right) .$

Now, there exists a sequence $g_{n}\in S_{G}\left( x_{n},y_{n}\right) $ such
that%
\begin{equation*}
h_{n}\left( t\right) =U\left( t,0\right) \varphi
(x_{n},y_{n})+\int_{0}^{t}U(t,s)g_{n}(s)ds.
\end{equation*}%
Consider the linear operator $\Phi :L^{1}\left( \left[ 0,b\right] ;X\right)
\rightarrow C\left( \left[ 0,b\right] ;X\right) $ defined by 
\begin{equation*}
\Phi f\left( t\right) =\int_{0}^{t}U(t,s)g_{n}(s)ds.
\end{equation*}%
Clearly, $\Phi $ is linear and continuous. So by Lemma \ref{graph}, we get
that $\Phi \circ S_{G}\left( x,y\right) $ is a closed graph operator.
Further, we have%
\begin{equation*}
h_{n}\left( t\right) -U\left( t,0\right) \varphi (x_{n},y_{n})\in \Phi \circ
S_{G}\left( x,y\right) .
\end{equation*}%
Since $x_{n}\rightarrow x$, $y_{n}\rightarrow y$ and $h_{n}\rightarrow h,$
therefore%
\begin{equation*}
h\left( t\right) -U\left( t,0\right) \varphi (x,y)\in \Phi \circ S_{G}\left(
x,y\right) .
\end{equation*}%
That is, there exists a function $g\in S_{G}\left( x,y\right) $ such that%
\begin{equation*}
h\left( t\right) =U\left( t,0\right) \varphi (y)+\int_{0}^{t}U(t,s)g(s)ds.
\end{equation*}%
Therefore $N$ has a closed graph, hence $N$ has closed values on $C\left( %
\left[ 0,b\right] \times X\times X,X\right) $.

We know that the family $\left\{ \int_{0}^{t}U(t,s)f(s)ds,f\in S_{F}\left(
W\left( t\right) \right) \right\} $ is equicontinuous, hence by Lemma \ref%
{equimnclem}, we have%
\begin{eqnarray*}
\mu \left( \int_{0}^{t}U(t,s)g(s)ds,\text{ }g\in S_{G}\left( W\left(
t\right) \times W\left( t\right) \right) \right) &\leqslant &\int_{0}^{t}\mu
\left( U(t,s)g(s),\text{ }g\in S_{G}\left( W\left( t\right) \times W\left(
t\right) \right) \right) ds \\
&\leqslant &M\int_{0}^{t}\mu \left( g(s),\text{ }g\in S_{G}\left( W\left(
t\right) \times W\left( t\right) \right) \right) ds \\
&\leqslant &Mt\mu \left( G\left( t,W\left( t\right) \right) \right) .
\end{eqnarray*}%
Therefore%
\begin{eqnarray*}
\mu \left( N\left( W\times W\right) \right) &=&\mu N\left( U\left(
t,0\right) \varphi (W\left( t\right) \times W\left( t\right)
)+\int_{0}^{t}U(t,s)g(s)ds,g\in S_{G}\left( W\left( t\right) \times W\left(
t\right) \right) \right) \\
&\leqslant &\mu \left( U\left( t,0\right) \varphi (W\left( t\right) \times
W\left( t\right) )\right) +\mu \left( \int_{0}^{t}U(t,s)g(s)ds,g\in
S_{G}\left( W\left( t\right) \times W\left( t\right) \right) \right) \\
&\leqslant &M\mu \left( \varphi (W\left( t\right) \times W\left( t\right)
)\right) +Mt\mu \left( G\left( t,W\left( t\right) \times W\left( t\right)
\right) \right) .
\end{eqnarray*}%
In view of $\left( H2\right) $, we get%
\begin{equation*}
\mu \left( N\left( W\times W\right) \right) \leqslant Mbk\mu \left( \frac{%
W\times W}{2}\right) .
\end{equation*}%
Therefore, for $Mbk<1,$ we obtain that $N$ has at least one coupled fixed
point. Hence, the system $\left( \text{\ref{evsys}}\right) -\left( \text{\ref%
{evcond}}\right) $ has at least one solution.\bigskip

\end{document}